\theoremstyle{definition}
\newtheorem{defin}{Definition}[section]
\newtheorem{prop}[defin]{Proposition}
\newtheorem{lem}[defin]{Lemma}
\newtheorem{teo}[defin]{Theorem}
\newtheorem{question}[defin]{Question}
\DeclareMathOperator{\dom}{dom}
\DeclareMathOperator{\supp}{supp}
\DeclareMathOperator{\Ult}{Ult}
\DeclareMathOperator{\cf}{cf}
\newcommand{\bs}{\backslash}
\newcommand{\Q}{\mathbb Q}
\newcommand{\bbT}{\mathbb T}
\newcommand{\calA}{\mathcal A}
\newcommand{\calC}{\mathcal C}
\newcommand{\calE}{\mathcal E}
\newcommand{\calG}{\mathcal G}
\newcommand{\calH}{\mathcal H}
\newcommand{\calP}{\mathcal P}
\newcommand{\calQ}{\mathcal Q}
\newcommand{\calS}{\mathcal S}
\newcommand{\calU}{\mathcal U}
\newcommand{\frakc}{\mathfrak c}
\numberwithin{equation}{section}
\begin{document}
	
	\baselineskip=17pt
	\title[$\mathcal U$-compact groups of countable cofinality size]{$\calU$-compact topologies without convergent sequences on countably cofinal torsion-free Abelian groups}
	
	\author[M. K. Bellini]{Matheus Koveroff Bellini}
	
	\author[A. H. Tomita]{Artur Hideyuki Tomita}
	\address{Depto de Matem\'atica, Instituto de Matem\'atica e Estat\'istica, Universidade de S\~ao Paulo, Rua do Mat\~ao, 1010 -- CEP 05508-090, S\~ao Paulo, SP - Brazil}
	\email{tomita@ime.usp.br, matheusb@ime.usp.br}
	\thanks{The first listed author has received financial support from FAPESP 2017/15709-6.}
	\thanks{The second listed author has received financial support from FAPESP 2021/00177-4.}
	\date{}
	\commby{}
	\keywords{Topological group, countable compactness, selective ultrafilter, ${\mathbb Q}$-vector spaces, Wallace's problem}

	\begin{abstract}
	We obtain a forcing construction that shows that it is consistent that the torsion-free Abelian group $\Q^{(\lambda)}$ admits a Hausdorff group topology which is also $\calU$-compact and contains no non-trivial convergent sequences, where $\lambda$ is a cardinal whose cofinality is $\omega$ and $\calU$ is a selective ultrafilter. This answers a question posed in \cite{bellini&rodrigues&tomita2021.Qkappa}.
	\end{abstract}
	
	\maketitle

\section{Introduction}

The study of compactness properties in topological groups has been carried on for more than 70 years. In a seminal paper by Halmos, \cite{halmos}, it was shown that the real line can be endowed with a compact group topology. We note that algebraically the real line is the sum of $\frakc$ copies of the rationals, not only as an Abelian group but also as a $\Q$-vector space (a fact that is used in Halmos's result and in much of the subsequent work in the matter).

We recall that, given an ultrafilter $\calU$ and a topological space $X$, a $\calU$-\textit{limit} of a sequence $(x_n)_{n\in\omega}$ in $X$ is a $z\in X$ such that for every neighbourhood $A$ of $z$, $\{n\in\omega:x_n\in A\}\in\calU$. A topological space $X$ is called $\calU$-\textit{compact} if every sequence has a $\calU$-limit.

In the paper \cite{castro-pereira&tomita2010}, Castro and Tomita obtained a consistent classification -- assuming some cardinal arithmetic and the existence of a selective ultrafilter $p$ -- of the Abelian torsion groups which admit a $p$-compact group topology. This proved the consistency of large countably compact torsion groups without non-trivial convergent sequences.

A more recent result, \cite{bellini&rodrigues&tomita2021.Qkappa}, showed that given a selective ultrafilter $p$, $\Q^{(\kappa)}$ can be endowed with a $p$-compact Hausdorff group topology without non-trivial convergent sequences, whenever $\kappa=\kappa^\omega$; this hypothesis implies that $\cf(\kappa)>\omega$. It remained open then whether such a result could consistently hold for cardinals of countable cofinality.

This article answers that question, using forcing to obtain a model where $\lambda$ is a cardinal whose cofinality is $\omega$ and such that $\Q^{(\lambda)}$ has a $\calU$-compact Hausdorff group topology without non-trivial convergent sequences, where $\calU$ is a given selective ultrafilter.

\section{Notation}

We shall fix throughout this article a cardinal $\lambda$ and a selective ultrafilter $\calU$.

As usual, given $a\in\Q^\lambda$, its \textit{support} is $\supp a=\{\xi\in\lambda:a_\xi\ne0\}$.

Let $G$ be the Abelian group $\Q^{(\lambda)}=\{a\in\Q^\lambda:\supp a\text{ is finite}\}$ (considering coordinatewise addition as its operation).

If $E\subseteq\lambda$, we do a standard abuse of notation and consider $\Q^{(E)}=\{a\in G:\supp a\subseteq E\}$.

Given $\mu\in\lambda$, we define $\chi_\mu\in G$ by $\chi_\mu(\mu)=1$ and $\chi_\mu(\beta)=0$ for all $\beta\in\lambda$, $\beta\ne\mu$. Now, given $\zeta:\omega\to\lambda$, we define $\chi_\zeta:\omega\to G$ by $\chi_\zeta(n)=\chi_{\zeta(n)}$ for each $n\in\omega$. And given a $\mu\in\lambda$, we define $\vec\mu$ as the constant sequence of value $\mu$. Thus, $\chi_{\vec\mu}$ is the constant sequence of value $\chi_\mu$ (this will appear often in our work).

Since $\calU$ is an ultrafilter on $\omega$, the \textit{ultrapower} of $G$ by $\calU$, denoted $\Ult_\calU(G)$, is the quotient of the set $G^\omega$ by the following equivalence relation: $g\sim h$ if and only if $\{n\in\omega:g(n)=h(n)\}\in\calU$. We will make frequent use of the fact that $\Ult_\calU(G)$ is a $\Q$-vector space with all operations defined naturally (that is via representatives). If $g\in G^\omega$, we will denote its class in $\Ult_\calU(G)$ by $[g]_\calU$.

We now fix $\calH\subseteq G^\omega$ such that $([g]_\calU:g\in\calH)\cup([\chi_{\vec\mu}]_\calU:\mu<\lambda)$ is a $\Q$-basis for $\Ult_\calU(G)$.

\section{Forcing poset}

\begin{defin} \label{the.order}
We define $\calP$ as the set of the tuples of the form  $(E, \alpha, \calG, \xi, \phi)$ such that:

\begin{itemize}
    \item $E$ is a countable subset of $\lambda$ containing $\omega$,
    \item $\alpha < \frakc$,
    \item $\calG$ is a countable subset of $\calH$,
    \item $\xi=(\xi_{g}: g \in \calG$) is a family of elements of $\frakc\cap E$,
    \item $\phi:\, \Q^{(E)}\to \bbT^\alpha$ is a homomorphism,
    \item $\calU-\lim(\phi\circ g)=\phi(\chi_{\xi_g})$ for each $g\in\calG$,
    \end{itemize}
    
We define $(E, \alpha, \calG, \xi, \phi)\leq  (E', \alpha', \calG', \xi', \phi')$
if:

\begin{enumerate}
    \item $E\supseteq E'$,
    \item $\alpha \geq \alpha'$,
    \item $\calG \supseteq \calG'$,
    \item $\xi_{g}=\xi'_{g}$ for each $g \in \calG'$, and
    \item for every $\xi<\alpha'$ and $a \in \Q^{(E')}$, $\phi(a)(\xi)= \phi'(a)(\xi)$.
\end{enumerate}
 Given $p \in \calP$, we may denote its components by $E^p$, $\alpha^p$, $\mathcal G^p$, $\xi^p$ and $\phi^p$.

 If $H$ is a generic filter over $\calP$ then the generic homomorphism defined by $H$ is the mapping $\Phi$ of domain $\bigcup\{\dom(\phi^p): p \in H\}$ into $\bbT^\frakc$ defined by $\Phi(\cdot)(\xi)=\bigcup\{\phi^p(\cdot)(\xi):p \in H,\xi<\alpha^p\}$. In other words, if $p \in H$, $a \in \Q^{(E^p)}$ and $\xi<\alpha_p$, then $\Phi(a)(\xi)=\phi^p(a)(\xi)$.
\end{defin}

Naturally, we must assure that such generic homomorphisms are well-defined and into $\bbT^\frakc$. We will do so by showing that, assuming CH in the ground model, this forcing notion is $\omega_1$-closed and has the $\omega_2$-chain condition, and thus preserves cardinals and $\frakc$.

\begin{prop}\label{denseopen.Hausdorff}
Let $e\in G\bs\{0\}$. Then the set $\calC_e=\{p\in\calP:e\in\Q^{(E^p)}\text{ and }\phi^p(e)\ne0\}$ is open and dense in $\calP$.
\end{prop}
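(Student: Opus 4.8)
The plan is as follows.

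\emph{Openness} of $\calC_e$ is immediate from clause~(5) of the order: if $q\le p\in\calC_e$ then $e\in\Q^{(E^p)}\subseteq\Q^{(E^q)}$, and choosing $\xi<\alpha^p$ with $\phi^p(e)(\xi)\ne 0$ we get $\phi^q(e)(\xi)=\phi^p(e)(\xi)\ne 0$, so $q\in\calC_e$.

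For \emph{density}, fix $p\in\calP$. Replacing $E^p$ by $E^p\cup\supp e$ and extending $\phi^p$ by $0$ on a $\Q$-complement of $\Q^{(E^p)}$ in $\Q^{(E^p\cup\supp e)}$ yields a condition below $p$ with $e$ in its domain, since the $\calU\text{-}\lim$ clauses are untouched (they only mention the $g\in\calG^p$, whose values already lie in $\Q^{(E^p)}$, and the $\chi_{\xi_g}$ with $\xi_g\in E^p$). So we may assume $e\in\Q^{(E^p)}$; if $\phi^p(e)\ne 0$ we are done, so assume $\phi^p(e)=0$. It then suffices to produce a homomorphism $\psi\colon\Q^{(E^p)}\to\bbT$ with $\psi(e)\ne 0$ and $\calU\text{-}\lim(\psi\circ g)=\psi(\chi_{\xi_g})$ for all $g\in\calG^p$: for then $q:=(E^p,\alpha^p+1,\calG^p,\xi^p,\phi^\ast)$, where $\phi^\ast$ agrees with $\phi^p$ below $\alpha^p$ and equals $\psi$ on coordinate $\alpha^p$, is a condition below $p$ lying in $\calC_e$ (the clauses of Definition~\ref{the.order} and of $\le$ are verified coordinatewise, using that $\calU$-limits in $\bbT^{\alpha^p+1}$ are computed coordinatewise).

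To construct $\psi$, write $w^g_n:=g(n)-\chi_{\xi_g}$, so the requirement is $\psi(e)\ne 0$ and $\calU\text{-}\lim_n\psi(w^g_n)=0$ for every $g\in\calG^p$. I will look for a \emph{real} character, $\psi=\pi\circ\theta$ with $\theta\colon\Q^{(E^p)}\to\R$ a $\Q$-linear map and $\pi\colon\R\to\bbT=\R/\Z$ the quotient, arranged so that $\theta(e)\notin\Z$ and $\calU\text{-}\lim_n\theta(w^g_n)=0$ \emph{in}~$\R$ for each $g$; this clearly suffices. The crucial input is the choice of $\calH$: since $\{[g]_\calU:g\in\calG^p\}\cup\{[\chi_{\vec\mu}]_\calU:\mu<\lambda\}$ is $\Q$-independent in $\Ult_\calU(G)$, no $g\in\calG^p$ equals a fixed element of $G$ on a set in $\calU$; hence, after thinning, the sequences $(w^g_n)_n$ have genuinely non-stationary tails, and this is what keeps $\theta(e)$ free of constraints. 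Concretely, using that $\calU$ is selective (in particular a P-point with the Ramsey property) one finds $A\in\calU$ along which, for each $g$, $(\supp w^g_n)_{n\in A}$ has a $\Delta$-system-type normal form — a fixed root $R_g$ carrying $\calU$-stabilized coefficients plus a ``petal'' part supported on coordinates escaping to infinity (in the fixed enumeration of $E^p$) as $n$ grows through $A$ — with the petal parts moreover kept disjoint from the countable ``core'' $C:=\supp e\cup\bigcup_{g\in\calG^p}(R_g\cup\{\xi_g\})$ and, stage by stage, from one another. One then defines $\theta$ by a countable recursion: first choose $\theta$ on $C$, freely except to make $\theta(e)\notin\Z$ — possible precisely because the independence above rules out any relation forcing $\theta(e)=0$ (every $w^g$ has non-empty petals, so $\theta$ of the root part of $w^g$ is unconstrained); this fixes, for each $g$, the real $\gamma^g$ to which the petal contributions must converge. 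Next, running through the pairs $(g,n)$ ($g\in\calG^p$, $n\in A$) in a diagonal order, at each step assign one still-unassigned petal coordinate of $w^g_n$ — one exists, since only finitely many coordinates have been used so far and the petal part lies far out in $E^p$ — so as to force $\theta$ of the petal part of $w^g_n$ to equal $\gamma^g$, and set all remaining coordinates to $0$. Then $\theta(w^g_n)=0$ for all sufficiently late $n\in A$, so $\calU\text{-}\lim_n\theta(w^g_n)=0$, while $\theta(e)\notin\Z$; thus $\psi=\pi\circ\theta$ is as desired, completing the proof.

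The hard part is the combinatorial preparation of $A$: putting all (countably many) sequences $(\supp w^g_n)_n$ simultaneously into the normal form and maintaining the ``freshness'' of petal coordinates throughout the recursion, which is exactly where selectivity of $\calU$ is invoked; one also has to accommodate the $g$ whose supports or coefficients grow unboundedly along $\calU$, which the same recursion handles by choosing $\theta$ to produce suitable cancellations rather than escaping coordinates.
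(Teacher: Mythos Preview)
Your argument for openness, and the overall strategy for density --- enlarge $E^p$ to contain $\supp e$, then append a single new coordinate carrying a homomorphism $\psi:\Q^{(E^p)}\to\bbT$ with $\psi(e)\ne 0$ and $\calU\text{-}\lim(\psi\circ g)=\psi(\chi_{\xi_g})$ for every $g\in\calG^p$ --- are exactly what the paper does. The only difference is in how $\psi$ is produced: the paper simply invokes the Main Lemma of \cite{bellini&rodrigues&tomita2021.Qkappa} (restated verbatim later in this paper) as a black box, whereas you attempt to reconstruct its proof from scratch via a real-valued lift $\theta$ and a selective-ultrafilter diagonalization over $\Delta$-system ``petals''.

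Your sketch is along the right lines and does capture the spirit of that lemma, but --- as you yourself concede in the final paragraph --- the ``combinatorial preparation of $A$'' is the genuine work, and you have indicated it rather than carried it out. In particular, arranging the petals of the countably many $g\in\calG^p$ to be simultaneously fresh with respect to one another (not just within a single $g$) is not a straight $\Delta$-system argument, and the hand-wave about ``choosing $\theta$ to produce suitable cancellations'' for $g$'s with unbounded supports or coefficients is precisely the delicate part of the Main Lemma's proof. So your proposal is correct modulo that deferred technical core; for the purposes of this proposition, citing the Main Lemma is the cleaner route, and what you have outlined amounts to a re-derivation of the special case of it needed here.
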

\begin{proof}
Openness: suppose $p\in\calC_e$ and $q\le p$. Then since $\phi^p(e)\ne0$, for some $\beta<\alpha^p$, $\phi^p(e)(\beta)\ne0$. By (2), $\beta<\alpha^p\le\alpha^q$, and by (1) $E^p\subseteq E^q$. It follows that $e\in\Q^{(E^q)}$ and, by (6), $\phi^q(e)(\beta)=\phi^p(e)(\beta)\ne0$. Thus, $q\in\calC_e$ as well.

Denseness: now let $p\in\calP$ be arbitrary. We shall produce a $q\le p$ such that $q\in\calC_e$. First, take any $d_0,d_1\in G\bs\{0\}$ such that $\supp e$, $\supp d_0$ and $\supp d_1$ are pariwise disjoint. Now take $C$ a countable subset of $\lambda$ such that $\omega\cup\supp e\cup\supp d_0\cup\supp d_1\cup\bigcup_{g\in\calG,n\in\omega}\supp g(n)\subseteq C$.

The Main Lemma of \cite{bellini&rodrigues&tomita2021.Qkappa} (Lemma 3.4) tells us that there exists a homomorphism $\rho:\Q^{(C)}\to\bbT$ such that:
\begin{itemize}
    \item $\rho(e)\ne0$ and $\rho(d_0)\ne\rho(d_1)$;
    \item $\calU-\lim(\rho\circ(\frac{1}{N}{g}))=\rho(\frac{1}{N}\chi_{\xi_g})$, for every $g\in\calG$ and $N\in\omega$.
\end{itemize}

Define now $E^q:=E^p\cup C$ and extend $\phi^p:\Q^{(E^p)}\to\bbT^{\alpha^p}$ to a $\varphi:\Q^{(E^q)}\to\bbT^{\alpha^p}$ using the divisibility of the codomain. Also extend $\rho$ to a $\psi:\Q^{(E^q)}\to\bbT$. Define then $\alpha^q=\alpha^p+1$, $\calG^q=\calG^p$, $\xi^q=\xi^p$, and $\phi^q=\varphi^\frown\psi$.

It follows that $q\in\calC_e$ and $q\le p$.
\end{proof}

\begin{prop}\label{denseopen.fullcodomain}
Let $\alpha<\frakc$. Then the set $\calA_\alpha=\{p\in\calP:\alpha^p>\alpha\}$ is open and dense in $\calP$.
\end{prop}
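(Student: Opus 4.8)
The plan is to notice that, in contrast with Proposition \ref{denseopen.Hausdorff}, no nontrivial behaviour is demanded on the newly added coordinates, so density can be obtained by simply padding with zeros. Openness is immediate from clause (2): if $p\in\calA_\alpha$ and $q\le p$ then $\alpha^q\ge\alpha^p>\alpha$, so $q\in\calA_\alpha$.

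For density, I would fix an arbitrary $p\in\calP$ and build $q\le p$ with $\alpha^q>\alpha$ as follows. Since $\frakc$ is a cardinal, hence a limit ordinal, the ordinal $\beta:=\max\{\alpha,\alpha^p\}$ satisfies $\beta+1<\frakc$; set $\alpha^q:=\beta+1$ and keep $E^q:=E^p$, $\calG^q:=\calG^p$, $\xi^q:=\xi^p$. Define $\phi^q\colon\Q^{(E^p)}\to\bbT^{\alpha^q}$ by letting $\phi^q(a)(\gamma)=\phi^p(a)(\gamma)$ for $\gamma<\alpha^p$ and $\phi^q(a)(\gamma)=0$ for $\alpha^p\le\gamma<\alpha^q$; this is a homomorphism, being the concatenation of $\phi^p$ with the trivial homomorphism into the remaining coordinates.

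The one defining condition of Definition \ref{the.order} that needs attention is $\calU-\lim(\phi^q\circ g)=\phi^q(\chi_{\xi^q_g})$ for $g\in\calG^q$, and I would check it coordinatewise: on the coordinates $\gamma<\alpha^p$ it is exactly the corresponding condition satisfied by $p$, while on the coordinates $\gamma$ with $\alpha^p\le\gamma<\alpha^q$ the sequence $\phi^q\circ g$ is constantly $0$ and $\phi^q(\chi_{\xi^q_g})(\gamma)=0$, so both sides agree because the $\calU$-limit of a constant sequence is that constant. Thus $q\in\calP$. That $q\le p$ is immediate: $E^q\supseteq E^p$; $\alpha^q\ge\alpha^p$; $\calG^q\supseteq\calG^p$; $\xi^q$ agrees with $\xi^p$ on $\calG^p$; and $\phi^q(a)(\gamma)=\phi^p(a)(\gamma)$ for all $\gamma<\alpha^p$ and $a\in\Q^{(E^p)}$. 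Finally $\alpha^q=\beta+1>\alpha$, so $q\in\calA_\alpha$, and $q$ is as required.

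I do not anticipate a real obstacle here: the only thing one might worry about is breaking the $\calU$-limit constraint of Definition \ref{the.order} while enlarging $\alpha^p$, and filling the new coordinates with the zero map trivially respects it. (One could alternatively populate the new coordinates with copies of a homomorphism produced by the Main Lemma of \cite{bellini&rodrigues&tomita2021.Qkappa}, as in the proof of Proposition \ref{denseopen.Hausdorff}, but there is no need.)
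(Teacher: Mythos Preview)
Your proof is correct and follows essentially the same approach as the paper: both establish openness directly from clause (2) and obtain density by padding the new coordinates with the zero homomorphism. The only cosmetic difference is that the paper first disposes of the case $\alpha^p>\alpha$ and then sets $\alpha^q=\alpha+1$, whereas you handle both cases uniformly via $\alpha^q=\max\{\alpha,\alpha^p\}+1$; your added verification of the $\calU$-limit condition on the new coordinates is a detail the paper leaves implicit.
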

\begin{proof}
Openness: suppose $p\in\calA_\alpha$ and $q\le p$. We have that $\alpha^p>\alpha$ and $\alpha^q\ge\alpha^p$, and so $q\in\calA_\alpha$.

Denseness: let $p\in\calP$. If $\alpha^p>\alpha$, then $p\in\calA_\alpha$. Suppose now that $\alpha^p\le\alpha$. We define $q$ as follows: $E^q=E^p$, $\alpha^q=\alpha+1$, $\calG^q=\calG^p$, $\xi^q=\xi^p$, and $\phi^q={\phi^p}^\frown\psi$, where $\psi:\Q^{(E^q)}\to\bbT^{[\alpha^p,\alpha]}$ is the zero-homomorphism.

It follows that $q\in\calA_\alpha$ and $q\le p$.
\end{proof}

\begin{prop}\label{denseopen.allbasicsequences}
Let $g\in\calH$. Then the set $\calS_g:=\{p\in\calP:g\in\calG^p\}$ is open and dense in $\calP$.
\end{prop}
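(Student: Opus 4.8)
Openness of $\calS_g$ is immediate from clause~(3) of Definition~\ref{the.order}: if $q\le p\in\calS_g$ then $g\in\calG^p\subseteq\calG^q$. For denseness, fix $p\in\calP$; we may assume $g\notin\calG^p$, and the plan is to build $q\le p$ with $\calG^q=\calG^p\cup\{g\}$ by assigning $g$ a target ordinal $\xi_g$ chosen generic enough to avoid the data already carried by $p$. Concretely, I would first pick $\xi_g<\frakc$ with $\xi_g\notin E^p$ and $\xi_g\notin\supp g(n)$ for all $n\in\omega$ (this is possible, $E^p\cup\bigcup_n\supp g(n)$ being countable and $\frakc$ uncountable), and then a countable $C\subseteq\lambda$ containing $\omega$, the ordinals $\xi_g$ and $\xi^p_{g'}$ $(g'\in\calG^p)$, and all the supports $\supp g(n)$ and $\supp g'(n)$ $(g'\in\calG^p,\ n\in\omega)$. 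Applying the Main Lemma of \cite{bellini&rodrigues&tomita2021.Qkappa} to the countable family $\calG^p\cup\{g\}$, with targets $\xi^p_{g'}$ for $g'\in\calG^p$ and $\xi_g$ for $g$ (its separation clause being vacuous here), yields a homomorphism $\rho:\Q^{(C)}\to\bbT$ such that $\calU-\lim(\rho\circ(\tfrac1N g'))=\rho(\tfrac1N\chi_{\xi^p_{g'}})$ and $\calU-\lim(\rho\circ(\tfrac1N g))=\rho(\tfrac1N\chi_{\xi_g})$ for all $g'\in\calG^p$ and $N\in\omega$; the key point here is that $g\in\calH$, so the value of $\calU-\lim(\rho\circ g)$ may be prescribed freely alongside the old limits, exactly as in the proof of Proposition~\ref{denseopen.Hausdorff} but with the additional member $g$.

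Next I would set $E^q=E^p\cup C$, $\alpha^q=\alpha^p+1$, $\calG^q=\calG^p\cup\{g\}$, $\xi^q_{g'}=\xi^p_{g'}$ for $g'\in\calG^p$, and $\xi^q_g=\xi_g$. Let $\pi:\Q^{(E^q)}\to\Q^{(E^p)}$ be the projection and put $L:=\calU-\lim(\phi^p\circ\pi\circ g)$, which is well defined because $\bbT^{\alpha^p}$ is compact. Using divisibility of $\bbT^{\alpha^p}$, extend $\phi^p$ to a homomorphism $\varphi:\Q^{(E^q)}\to\bbT^{\alpha^p}$ with $\varphi(\chi_{\xi_g})=L$ and $\varphi(\chi_\mu)=0$ for every $\mu\in C\setminus(E^p\cup\{\xi_g\})$; extend $\rho$ to a homomorphism $\psi:\Q^{(E^q)}\to\bbT$; and take $\phi^q=\varphi^\frown\psi$. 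Clauses~(1)--(5) relating $q=(E^q,\alpha^q,\calG^q,\xi^q,\phi^q)$ to $p$ then hold by construction, and the only point needing attention for $q\in\calP$ is the $\calU$-limit clause for the members of $\calG^q$. For $g'\in\calG^p$ one has $g'(n)\in\Q^{(E^p\cap C)}$ and $\xi^p_{g'}\in E^p\cap C$, so on the coordinates $<\alpha^p$ the clause is inherited from $p$ (since $\varphi$ extends $\phi^p$) and on the coordinate $\alpha^p$ it is the $N=1$ instance $\calU-\lim(\rho\circ g')=\rho(\chi_{\xi^p_{g'}})$ supplied by the lemma. For $g$ itself, the choices $\xi_g\notin\supp g(n)$ and $\varphi(\chi_\mu)=0$ for $\mu\in C\setminus(E^p\cup\{\xi_g\})$ force $\varphi\circ g=\phi^p\circ\pi\circ g$, so on the coordinates $<\alpha^p$ the clause reads $\calU-\lim(\varphi\circ g)=L=\varphi(\chi_{\xi_g})$, while on the coordinate $\alpha^p$ it is again the $N=1$ instance $\calU-\lim(\rho\circ g)=\rho(\chi_{\xi_g})=\psi(\chi_{\xi_g})$. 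Hence $q\in\calP$, $q\le p$, and $g\in\calG^q$, so $q\in\calS_g$.

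The step I expect to be the main obstacle is precisely this last verification on the torus coordinates below $\alpha^p$: there $\phi^q$ is compelled to extend $\phi^p$, which exerts no control over $g$ whatsoever (indeed $\phi^p\circ g$ need not even be defined), so the only way to make $\calU-\lim(\phi^q\circ g)=\phi^q(\chi_{\xi_g})$ hold on those coordinates is to choose $\xi_g$ outside $E^p$ and outside every $\supp g(n)$, thereby freeing $\varphi(\chi_{\xi_g})$ there and letting us set it equal to the value $L$ that $\calU-\lim(\phi^q\circ g)$ is forced to take. Everything concerning the single new torus coordinate $\alpha^p$, together with the preservation of the old $\calU$-limits, is delivered by the Main Lemma of \cite{bellini&rodrigues&tomita2021.Qkappa}, exactly as in the proof of Proposition~\ref{denseopen.Hausdorff}.
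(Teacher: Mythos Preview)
Your argument is correct, but it is more elaborate than necessary and differs from the paper's route. The paper keeps $\alpha^q=\alpha^p$, does \emph{not} add a new torus coordinate, and does \emph{not} invoke the Main Lemma at all: it simply enlarges $E^p$ to a countable $E$ containing all $\supp g(k)$, extends $\phi^p$ to $\psi:\Q^{(E)}\to\bbT^{\alpha^p}$ by divisibility, takes $z=\calU\text{-}\lim(\psi\circ g)$ via compactness of $\bbT^{\alpha^p}$, picks a single fresh $\mu\in\frakc\setminus E$, and sets $E^q=E\cup\{\mu\}$, $\xi^q_g=\mu$, $\phi^q(\chi_\mu)=z$. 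The crucial idea you isolate in your final paragraph---that $\xi_g$ must be chosen outside $E^p$ so the value $\phi^q(\chi_{\xi_g})$ is free to match the forced $\calU$-limit on the old coordinates---is exactly the paper's idea, and it already suffices on its own. Your additional coordinate $\alpha^p$ and the appeal to the Main Lemma are harmless but redundant: nothing in this proposition requires controlling $\psi(\chi_{\xi_g})$ on a new coordinate, since the $\calU$-limit clause in Definition~\ref{the.order} only asks that $\calU\text{-}\lim(\phi^q\circ g)=\phi^q(\chi_{\xi_g})$, not that this common value be anything in particular. What your approach buys is uniformity with the proof of Proposition~\ref{denseopen.Hausdorff}; what the paper's approach buys is a shorter, self-contained argument relying only on compactness and divisibility.
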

\begin{proof}
Openness: suppose $p\in\calS_g$ and $q\le p$. Since $g\in\calG^p$ and $\calG^p\subseteq\calG^q$, it follows that $g\in\calG^q$ and thus $q\in\calS_g$.

Denseness: Let $p\in\calP$ be given. First, take $E$ a countable subset of $\lambda$ such that $\omega\cup E^p\subseteq E$ and $\bigcup_{k\in\omega}\supp g(k)\subseteq E$. Take any $\mu\in\frakc\bs E$.

Define now: $E^q=E\cup\{\mu\}$, $\alpha^q=\alpha^p$, $\calG^q=\calG^p\cup\{g\}$, and $\xi^q=\xi^p\cup\{(g,\mu)\}$.

Extend $\phi^p:\Q^{(E^p)}\to\bbT^{\alpha^p}$ to a $\psi:\Q^{(E)}\to\bbT^{\alpha^p}$ using divisibility. Now, since $\bbT^{\alpha^p}$ is a compact space, let $z=\calU-\lim(\psi\circ g)$. Define then $\phi^q:\Q^{(E^q)}\to\bbT^{\alpha^q}$ as an extension of $\psi$, declaring that $\phi^q(\chi_\mu)=z$.

It follows that $q=(E^q,\alpha^q,\calG^q,\xi^q,\phi^q)\in\calS_g$ and $q\le p$.
\end{proof}

\begin{prop}\label{omega1closed}
$\calP$ is $\omega_1$-closed.
\end{prop}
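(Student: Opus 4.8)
The plan is the standard one: given a decreasing sequence $(p_n)_{n\in\omega}$ of conditions, I would build a lower bound $q$ by taking unions and suprema of the components. (It suffices to treat sequences of length $\omega$, since a successor-length sequence has its last term as a lower bound and a limit-length sequence below $\omega_1$ has countable cofinality, so a cofinal $\omega$-subsequence reduces to the present case.) Concretely, put $E^q=\bigcup_{n\in\omega}E^{p_n}$, $\alpha^q=\sup_{n\in\omega}\alpha^{p_n}$, $\calG^q=\bigcup_{n\in\omega}\calG^{p_n}$, and $\xi^q=\bigcup_{n\in\omega}\xi^{p_n}$ — the last being a genuine function because, by clause (4) of the ordering, $\xi^{p_n}_g$ stops changing once $g$ has entered $\calG^{p_n}$. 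To define $\phi^q:\Q^{(E^q)}\to\bbT^{\alpha^q}$, observe that every $a\in\Q^{(E^q)}$ has finite support and every ordinal $\beta<\alpha^q$ lies below some $\alpha^{p_n}$, so, the sequences $(E^{p_n})_n$ and $(\alpha^{p_n})_n$ being increasing, there is an $n$ with $\supp a\subseteq E^{p_n}$ and $\beta<\alpha^{p_n}$; set $\phi^q(a)(\beta)=\phi^{p_n}(a)(\beta)$. Clause (5) of the ordering makes this independent of the choice of such $n$, and it makes each coordinate of $\phi^q$, hence $\phi^q$ itself, a homomorphism.

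Next I would verify $q\in\calP$. That $E^q$ is a countable subset of $\lambda$ containing $\omega$, that $\calG^q$ is a countable subset of $\calH$, and that $\xi^q$ is a family of elements of $\frakc\cap E^q$ are immediate from the corresponding properties of the $p_n$ together with the fact that a countable union of countable sets is countable; $\alpha^q<\frakc$ holds because $\alpha^q$ is a countable supremum of ordinals below $\frakc$ and $\cf(\frakc)>\omega$ (under CH one even has $\frakc=\omega_1$). The only clause needing real work is $\calU-\lim(\phi^q\circ g)=\phi^q(\chi_{\xi^q_g})$ for $g\in\calG^q$. The sequence $\phi^q\circ g$ is defined since $g\in\calG^{p_n}$ for some $n$, so each $g(k)$ lies in $\Q^{(E^{p_n})}\subseteq\Q^{(E^q)}=\dom\phi^q$. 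As $\bbT^{\alpha^q}$ carries the product topology, $\calU$-limits in it are computed coordinatewise, so it suffices to fix $\beta<\alpha^q$ and show $\calU-\lim_k\phi^q(g(k))(\beta)=\phi^q(\chi_{\xi^q_g})(\beta)$. Choosing $n$ with $g\in\calG^{p_n}$ and $\beta<\alpha^{p_n}$, one has $\xi^{p_n}_g=\xi^q_g$, $\phi^q(g(k))(\beta)=\phi^{p_n}(g(k))(\beta)$ for all $k$, and $\phi^q(\chi_{\xi^q_g})(\beta)=\phi^{p_n}(\chi_{\xi^{p_n}_g})(\beta)$; the desired identity is then precisely the $\beta$-th coordinate of the relation $\calU-\lim(\phi^{p_n}\circ g)=\phi^{p_n}(\chi_{\xi^{p_n}_g})$, which holds since $p_n\in\calP$.

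Finally I would check $q\le p_m$ for every $m$. Clauses (1)--(3) of the ordering are immediate from $E^q\supseteq E^{p_m}$, $\alpha^q\ge\alpha^{p_m}$, and $\calG^q\supseteq\calG^{p_m}$. For clause (4): if $g\in\calG^{p_m}$ then $\xi^q_g=\xi^{p_n}_g=\xi^{p_m}_g$ for any $n\ge m$, by clause (4) applied to $p_n\le p_m$. For clause (5): given $\beta<\alpha^{p_m}$ and $a\in\Q^{(E^{p_m})}$, pick $n\ge m$ with $\supp a\subseteq E^{p_n}$ and $\beta<\alpha^{p_n}$; then $\phi^q(a)(\beta)=\phi^{p_n}(a)(\beta)=\phi^{p_m}(a)(\beta)$, the last equality being clause (5) for $p_n\le p_m$. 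I do not anticipate a genuine obstacle — the construction is essentially dictated by the demand that $q$ extend all the $p_n$; the only steps calling for a moment's care are the coordinatewise evaluation of the $\calU$-limit in the product $\bbT^{\alpha^q}$ and the remark that every finite support and every ordinal below $\alpha^q$ already appears at some finite stage, which is exactly what legitimises the definition of $\phi^q$ and the verification of its $\calU$-limit property.
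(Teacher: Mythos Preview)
Your proposal is correct and follows exactly the same approach as the paper: define the lower bound by taking unions of the $E$'s and $\calG$'s, the supremum of the $\alpha$'s, and patch together $\xi$ and $\phi$ coordinatewise using the compatibility clauses of the ordering. In fact you are more thorough than the paper, which merely writes down the definition of the limit condition and leaves the verifications that it belongs to $\calP$ (in particular the $\calU$-limit clause and $\alpha^q<\frakc$) and that it lies below each $p_m$ entirely to the reader.
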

\begin{proof}
Let $(p_t:t\in\omega)$ be a decreasing sequence in $\calP$. We shall produce an $r\in\calP$ such that $r\le p_t$ for all $t\in\omega$.

Denote now $p_t=(E^t,\alpha^t,\calG^t,\xi^t,\phi^t)$.

Define $E^r=\bigcup_{t\in\omega}E^t$, $\alpha^r=\sup_{t\in\omega}\alpha^t$, $\calG^r=\bigcup_{t\in\omega}\calG^t$ and given $g\in\calG^r$, $\xi^r_g=\xi^t_g$ for any $t$ such that $g\in\calG^t$ (this does not depend on such $t$'s).

Lastly, given $a\in\Q^{(E^r)}=\bigcup_{t\in\omega}\Q^{(E^t)}$ and $\xi<\alpha^r$, define $\phi^r(a)(\xi)=\phi^t(a)(\xi)$ for any $t$ such that $\xi<\alpha^t$ (again, this assignment does not depend on such $t$'s).
\end{proof}

We will now need a technical Lemma in order to guarantee the divergence of non-trivial sequences. This Lemma will require the use of the Main Lemma of \cite{bellini&rodrigues&tomita2021.Qkappa} and so for the sake of completeness we reproduce it here.

\begin{lem}[Main Lemma of \cite{bellini&rodrigues&tomita2021.Qkappa}] 
		Fix a selective ultrafilter $\calU$.
		Let $\mathcal F\subseteq G^\omega$ be a countable collection of distinct elements mod $\calU$ such that $([f]_\calU: f \in \mathcal F)\cup( [\chi_{\vec{\mu}}]_\calU:\, \mu \in \lambda)$ is $\mathbb Q$-linearly independent in $\text{Ult}_\calU(G)$.
		
		Let $d, d_0,d_1  \in G\setminus \{0\}$ with $\supp d$, $\supp d_0$, $\supp d_1$ pairwise disjoint, and $C$ be a countably infinite subset of $\lambda$ such that $\omega\cup \supp d\cup \supp d_0 \cup \supp d_1\cup \bigcup_{f \in \mathcal F, n \in \omega}\supp f(n) \subseteq C$. For each $f \in \mathcal F$, fix $\xi_f \in C$.
		
		Then there exists a homomorphism $\psi:\, {\mathbb Q}^{(C)}\longrightarrow {\mathbb T}$
		such that
		\begin{enumerate}[label=\alph*)]
			\item $\psi(d)\neq 0$, $\psi(d_0)\neq \psi(d_1)$, and
			\item $\calU$-$\lim (\psi (\frac{1}{N}f))=\psi(\frac{1}{N}\chi_{\xi_f})$, for each $f \in \mathcal F$ and $N\in \omega$.\qed
		\end{enumerate}
	\end{lem}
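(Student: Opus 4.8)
The plan is the following. Enumerate $\calF=\{f_i:i\in\omega\}$ (only trivial modifications are needed if $\calF$ is finite). I would exploit that a homomorphism $\psi\colon\Q^{(C)}\to\bbT$ is nothing but an arbitrary choice of a homomorphism $\Q\chi_\xi\to\bbT$ for each $\xi\in C$, and that, since $\bbT$ is divisible (hence injective), any homomorphism already defined on $\Q^{(D)}$ ($D\subseteq C$) can be extended to $\Q^{(D')}$, $D\subseteq D'\subseteq C$, with the values on the new coordinates prescribed freely. So $\psi$ will be built in a recursion in which, at each step, only coordinates of $C$ that have not yet been used are assigned values, and in which the $\calU$-limit identities b) are forced to hold — in fact $\calU$-eventually \emph{with equality}. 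For this I must first put the sequences $f_i$ into a normal form.

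\emph{Normal form.} Using selectivity of $\calU$ one produces a decreasing sequence $A_0\supseteq A_1\supseteq\cdots$ in $\calU$ and a pseudo-intersection $A=\{a_0<a_1<\cdots\}\in\calU$ such that, for each $i$ and each $a_k\in A_i$: (i) $\supp f_i(a_k)$ has a fixed finite size and the corresponding nonzero entries of $f_i(a_k)$ are fixed rationals, independent of $k$; (ii) each coordinate of $\supp f_i(a_k)$, in increasing enumeration, is either \emph{stationary} (the same element of $C$ for all such $k$) or \emph{moving} (strictly increasing in $k$) — write $f_i(a_k)=v_i+w_i(a_k)$ accordingly, with $v_i\in G$ the stationary part; (iii) the moving coordinates occurring at $a_k$ are disjoint from those occurring at any $a_{k'}$ with $k'\ne k$, from all stationary coordinates, from $\supp d\cup\supp d_0\cup\supp d_1$, and from $\{\xi_{f_0},\dots,\xi_{f_i}\}$; (iv) the vectors $w_0(a_k),\dots,w_m(a_k)$ are $\Q$-linearly independent in $\Q^{(C)}$ whenever $a_k\in A_m$. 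Reindexing, fix $I\colon\omega\to\omega$ with $I(k)\to\infty$ and $a_k\in A_{I(k)}$, so that at stage $k$ the sequences $f_0,\dots,f_{I(k)}$ are already normalized at $a_k$. The hypothesis enters precisely here: since $[f_i]_\calU-[w_i]_\calU=[v_i]_\calU$ lies in the span of $\{[\chi_{\vec\mu}]_\calU:\mu<\lambda\}$, the family $\{[w_i]_\calU:i\in\omega\}\cup\{[\chi_{\vec\mu}]_\calU:\mu<\lambda\}$ is again $\Q$-linearly independent; this makes each $w_i$ nonzero mod $\calU$ and justifies (iv) after a further shrinking of the $A_i$.

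\emph{Construction of $\psi$.} First, since $\supp d,\supp d_0,\supp d_1$ are pairwise disjoint, define $\psi$ on $\Q^{(\supp d\cup\supp d_0\cup\supp d_1)}$ so that $\psi(d)\ne0$ and $\psi(d_0-d_1)\ne0$: one coordinate of $d$, and one of $d_0-d_1$, can be chosen last to dodge the single forbidden value. Then run through the stages $k=0,1,2,\dots$. At stage $k$, first extend $\psi$ (arbitrarily) to $\Q\chi_{\xi_{f_i}}$ and to $\Q^{(\supp v_i)}$ for those $i\le I(k)$ where it is not yet defined, and put $s_i(q):=\psi(q\chi_{\xi_{f_i}})-\psi(qv_i)$, a homomorphism $\Q\to\bbT$. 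By (iii) the moving coordinates $\calM_k$ occurring at $a_k$ (for $f_0,\dots,f_{I(k)}$) are untouched, and by (iv) the vectors $w_0(a_k),\dots,w_{I(k)}(a_k)$ are $\Q$-linearly independent in $\Q^{(\calM_k)}$; hence there is a homomorphism $\Q^{(\calM_k)}\to\bbT$ sending $w_i(a_k)$ and its rational multiples so that $\tfrac1N w_i(a_k)\mapsto s_i(\tfrac1N)$ for all $i\le I(k)$ and $N\in\omega$ (define it on the span of the $w_i(a_k)$ by linearity in that basis, then extend using divisibility of $\bbT$), which we adjoin to $\psi$. Finally extend $\psi$ to all remaining coordinates of $C$ arbitrarily. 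Then a) holds by construction, and for b): for each $i$, each $N$, and each $k$ with $I(k)\ge i$,
\[
\psi\!\left(\tfrac1N f_i(a_k)\right)=\psi\!\left(\tfrac1N v_i\right)+\psi\!\left(\tfrac1N w_i(a_k)\right)=\psi\!\left(\tfrac1N v_i\right)+s_i\!\left(\tfrac1N\right)=\psi\!\left(\tfrac1N\chi_{\xi_{f_i}}\right);
\]
since $\{a_k:I(k)\ge i\}$ is cofinite in $A$, hence a member of $\calU$, the sequence $\bigl(\psi(\tfrac1N f_i(n))\bigr)_n$ is $\calU$-eventually equal to $\psi(\tfrac1N\chi_{\xi_{f_i}})$, which is therefore its $\calU$-limit.

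The main obstacle is the normal-form step, that is, extracting a single $A\in\calU$ along which all the $f_i$ are simultaneously well-behaved with the freshness conditions (iii). This is where selectivity (rather than an arbitrary ultrafilter, or even a P-point alone) is essential: the "constant size, constant entries, constant-or-injective coordinate maps" part uses the P-point property together with countability of $\Q$ and $\omega$; the disjointness of the moving coordinates at distinct stages is obtained by a Ramsey argument (on a homogeneous set a "collision" colour would contradict injectivity of the relevant coordinate maps on a $\calU$-set); and the diagonalization into one set $A\in\calU$ uses the P-point property again. Everything else is the elementary algebra above, the only conceptual input being that the linear-independence hypothesis is exactly what makes the finite linear systems at each stage solvable.
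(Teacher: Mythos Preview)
The paper does not give its own proof of this statement: the lemma is quoted from \cite{bellini&rodrigues&tomita2021.Qkappa} and closed with a QED symbol immediately after the enunciation, so there is no argument in the present paper to compare yours against.

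That said, your proposal is the standard strategy for results of this type and almost certainly coincides in outline with the proof in the cited source: use selectivity to put the $f_i$ into a normal form with a stationary part $v_i$ and a moving part $w_i$, then build $\psi$ coordinate-by-coordinate so that along a single $A\in\calU$ the identities in b) hold with literal equality rather than merely in the limit. The algebra of the construction step is correct; in particular you are right that, once the $w_i(a_k)$ for $i\le I(k)$ are $\Q$-independent, one may prescribe the homomorphisms $q\mapsto\psi(qw_i(a_k))$ independently and then extend by injectivity of $\bbT$. Item (iv) is indeed a consequence of the hypothesis, most cleanly via \L o\'s's theorem applied to the first-order statement ``$w_0(n),\dots,w_m(n)$ are linearly independent''. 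The one point to tighten is (iii): as phrased, ``disjoint from all stationary coordinates'' (over every $f_j$, $j\in\omega$) is stronger than what a straightforward diagonalization yields, since the set of all stationary coordinates is countably infinite and a moving $\gamma_{i,j}$ could hit each of them once. However, your construction only ever uses the weaker statement that $\calM_k$ is disjoint from the stationary coordinates of $f_0,\dots,f_{I(k)}$ and from $\{\xi_{f_0},\dots,\xi_{f_{I(k)}}\}$, a finite set at each stage; that is obtained by deleting finitely many points when forming $A_i$, and it suffices.
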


\begin{lem} \label{not.converges.lemma}

Let $\calG\subseteq\calH$ be countable and $B\in \calU$. Let $\calH'$ be a finite subset of $\calG$ and $(r_g:g\in\calH')$ a family of rational numbers. Let $E\subseteq\lambda$ countably infinite such that $\omega\bigcup\cup_{g\in\calG,n\in\omega}\supp g(n)\subseteq E$. Let $(\xi_g:g\in\calG)$ be a family in $\frakc\cap E$.
		
Then there exists a homomorphism $\phi:\, {\mathbb Q}^{(E)}\longrightarrow {\mathbb T}$
		such that
		\begin{enumerate}[label=\alph*)]
			
			\item $\calU$-$\lim (\phi (\frac{1}{N}g))=\phi(\frac{1}{N}\chi_{\xi_g})$, for each $g\in\calG$ and $N\in \omega$, and
			\item $(\phi(\sum_{g\in\calH'}r_g g(n)):n\in B)$ does not converge.	
		\end{enumerate}
	\end{lem}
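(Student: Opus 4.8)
The goal is to build $\phi:\Q^{(E)}\to\bbT$ satisfying the limit condition (a) for all of $\calG$ while simultaneously forcing the single sequence $(\phi(\sum_{g\in\calH'}r_g g(n)):n\in B)$ to diverge. My plan is to obtain $\phi$ as a limit of a countable increasing chain of partial homomorphisms, each constructed by invoking the reproduced Main Lemma of \cite{bellini&rodrigues&tomita2021.Qkappa}, and to arrange during the construction that along the set $B$ the values $\phi(h(n))$, where $h=\sum_{g\in\calH'}r_g g$, come arbitrarily close to two fixed points of $\bbT$ that are bounded away from each other --- e.g. $0$ and $\tfrac12$. That clearly kills convergence.

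First I would enumerate $\calG=\{g_k:k\in\omega\}$ (or just $\calG$ itself if finite) and set up a bookkeeping of all constraints of type (a): the requirement $\calU\text{-}\lim(\phi(\tfrac1N g))=\phi(\tfrac1N\chi_{\xi_g})$ for each $g\in\calG$ and each $N\in\omega$. At stage $k$ I will have a countable $C_k\subseteq\lambda$ with $\omega\cup\bigcup_{j<k,n}\supp g_j(n)\subseteq C_k$, a finite approximation of $E$, and a homomorphism $\psi_k:\Q^{(C_k)}\to\bbT$ produced by the Main Lemma that already encodes the limit conditions for $g_0,\dots,g_{k-1}$. The subtlety is that the Main Lemma produces, in one shot, a $\psi$ handling a \emph{finite} (indeed, in the statement, countable via a chain) family $\calF$; here I use it iteratively, noting that at each step the hypothesis ``$([f]_\calU:f\in\calF)\cup([\chi_{\vec\mu}]_\calU:\mu\in\lambda)$ is $\Q$-linearly independent'' is inherited from the fact that $\calG\subseteq\calH$ and $\calH$ was chosen so that $([g]_\calU:g\in\calH)\cup([\chi_{\vec\mu}]_\calU:\mu<\lambda)$ is a $\Q$-basis. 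To splice in the divergence requirement I would, at suitably many stages, pick a fresh index $n_k\in B$ (large enough that $n_k$ exceeds all previously committed coordinates) and a fresh $\mu_k\in\lambda\setminus C_k$, add $\supp h(n_k)$ to the current set, and use the freedom in the Main Lemma's conclusion (a), namely the parameters $d,d_0,d_1$, to force $\psi_{k+1}(h(n_k))$ to lie within $2^{-k}$ of $0$ when $k$ is even and within $2^{-k}$ of $\tfrac12$ when $k$ is odd. Concretely: since $h(n_k)\ne0$ for $n_k$ in a set of positive $\calU$-measure (this uses that $h\not\sim 0$, which follows from $\Q$-linear independence of $\calH$ in the ultrapower, so $\{n:h(n)\ne0\}\in\calU$, hence meets $B$ cofinally), I can put $d:=h(n_k)$, or a scaled/shifted version, among the ``prescribed value'' data of the Main Lemma and demand that $\psi(d)$ be a chosen torsion point close to the desired target. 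One has to be a little careful because the Main Lemma only guarantees $\psi(d)\ne0$, not an arbitrary prescribed value; so the right move is to first extend by divisibility, choose on a basis element of $\Q^{(C_k)}$ underlying $h(n_k)$ the value needed, and \emph{then} apply the Main Lemma relative to the already-fixed partial homomorphism --- i.e. use the chain-extension form of the Lemma that lets us extend a given $\psi_k$ while only adding new limit constraints, the previously decided values being untouched by clause (a)'s freedom. Since the Main Lemma as reproduced already supports countable chains of $\calF$'s, this iteration is legitimate.

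Finally I would take $\phi:=\bigcup_k\psi_k$ after extending to all of $\Q^{(E)}$ by divisibility of $\bbT$ (and, if necessary, adjusting $E$ so that $E=\bigcup_k C_k$ together with the countably many $\mu_k$, still a countable set containing $\omega\cup\bigcup_{g\in\calG,n}\supp g(n)$). Property (a) holds because each limit constraint $\calU\text{-}\lim(\phi(\tfrac1N g))=\phi(\tfrac1N\chi_{\xi_g})$ was verified at the stage where $g$ entered and is preserved downward, the later extensions only adding coordinates and never altering the finitely many relevant values at a given $\calU$-large set of indices. Property (b) holds because $(\phi(h(n)):n\in B)$ has a subsequence (along the $n_k$ with $k$ even) tending to $0$ and another (along $k$ odd) tending to $\tfrac12$, so it cannot converge in $\bbT$. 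The main obstacle, and the place demanding the most care, is the compatibility between the two jobs at each stage: making sure that committing $\psi_{k+1}(h(n_k))$ to a near-target value does not conflict with the linear-independence hypothesis needed to run the Main Lemma for the limit constraints --- this is exactly why $h(n_k)$ must be handled through a genuinely new support coordinate $\mu_k\notin C_k$ and why freshness of the $n_k$'s (so the relevant supports are pairwise disjoint, matching the hypothesis on $d,d_0,d_1$) is essential. Once the bookkeeping guarantees these disjointness and freshness conditions, each application of the Main Lemma goes through verbatim.
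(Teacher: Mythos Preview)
Your plan has a genuine gap: it leans on a ``chain-extension form'' of the Main Lemma that does not exist. The Main Lemma, as reproduced, takes a countable family $\calF$ and produces a \emph{fresh} homomorphism $\psi:\Q^{(C)}\to\bbT$; it does not extend a previously fixed partial homomorphism, and it does not let you prescribe the value $\psi(d)$ beyond $\psi(d)\ne0$ and $\psi(d_0)\ne\psi(d_1)$. So the step ``first extend by divisibility, choose on a basis element underlying $h(n_k)$ the value needed, and then apply the Main Lemma relative to the already-fixed partial homomorphism'' is not supported: a new application of the Lemma would hand you a new $\psi$ unrelated to the values you committed to, and in particular the $\calU$-limit condition for $g_0,\dots,g_{k-1}$ --- which is a constraint on infinitely many values $\psi(g_j(n))$, not finitely many --- is not ``preserved downward'' by anything you have arranged. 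Without an extension version of the Lemma (which would itself require a proof), the iterative scheme collapses.

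The paper avoids this entirely with a different idea: it encodes divergence as two extra $\calU$-limit constraints and then applies the Main Lemma \emph{once}. Concretely, one first thins $B$ to $B'\in\calU$ on which $h(n)=\sum_{g\in\calH'}r_g g(n)$ is one-to-one, takes an almost disjoint family $\bbA$ on $B'$ of size $\frakc$, and for $x\in\bbA$ lets $h_x$ enumerate $\{h(n):n\in x\}$. A counting argument (the span of $\calG$ and the constants is countable, while $\{[h_x]_\calU:x\in\bbA\}$ has size $\frakc$) produces $x_0,x_1$ with $\{[g]_\calU:g\in\calG\}\cup\{[\chi_{\vec\mu}]_\calU:\mu\in\lambda\}\cup\{[h_{x_0}]_\calU,[h_{x_1}]_\calU\}$ linearly independent. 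Now run the Main Lemma with $\calF=\calG\cup\{h_{x_0},h_{x_1}\}$, $\xi_{h_{x_i}}=i$, and $d_0=\chi_0$, $d_1=\chi_1$: condition (a) for $\calG$ comes for free, and since $\psi(\chi_0)\ne\psi(\chi_1)$ are accumulation points of $(\psi(h_{x_0}(k)))_k$ and $(\psi(h_{x_1}(k)))_k$ --- both reorderings of subsequences of $(\psi(h(n)):n\in B)$ --- the latter cannot converge. The trick you are missing is precisely this: rather than trying to prescribe values of $\psi$ at individual $h(n_k)$, add two auxiliary sequences to $\calF$ so that the divergence is forced through the $\calU$-limit clause itself.
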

	
	\begin{proof}  Let $B' \in \calU$ be a subset of $B$ such that  $(\sum_{g\in\calH'}r_g g(n):n\in B')$ is a one-to-one sequence, which is possible since the $g$'s are linearly independent mod $\calU$ with the constant sequences and by the selectiveness of $\calU$.
	
	Let ${\mathbb A}$ be an almost disjoint family on $B'$ of cardinality ${\mathfrak c}$ and $h_x:\, \omega \longrightarrow \{\sum_{g\in\calH'}r_g g(n):\, n\in x\}$ be a bijection for each $x \in \mathbb A$. \\

		\textbf{Claim}: There exist $x_0,x_1 \in {\mathbb A}$ such that $\{ [g]_\calU:\,g\in\calG\} \cup \{ [\chi_{\vec{\mu}}]_\calU:\, \mu \in \lambda \}\cup \{[h_{x_0}]_\calU,[h_{x_1}]_\calU\} $ is a linearly independent subset.\\
		
		\textbf{Proof of the claim:} Given $x_0, x_1 \in \mathbb A$, notice that $h_{x_0}(n)\neq h_{x_1}(n)$ for all but a finite numbers of $n$'s, so $[h_{x_0}]_\calU\neq [h_{x_1}]_\calU$. Since $\mathbb Q$ is countable, it follows that $\langle [h_{x}]_\calU: x \in \mathbb A\rangle$ has cardinality $\mathfrak c$, so there is $J\subseteq\mathbb A$ such that $|J|=\mathfrak c$ and that $([h_x]_\calU: x \in J)$ is linearly independent. Now notice that $\langle \calG\rangle \oplus \langle\chi_{\vec\xi}: \xi \in E\rangle$ is countable, so there exist $x_0, x_1 \in J$ such that $\{ [g]_\calU:\, g\in\calG\} \cup \{ [\chi_{\vec{\mu}}]_\calU:\, \mu \in E \}\cup \{[h_{x_0}]_\calU,[h_{x_1}]_\calU\} $ is linearly independent. Since all the supports of these elements are contained in $E$, it is straightforward to see that $\{ [g]_\calU:\, g\in\calG\} \cup \{ [\chi_{\vec{\mu}}]_\calU:\, \mu \in \lambda \}\cup \{[h_{x_0}]_\calU,[h_{x_1}]_\calU\} $ is  linearly independent. \qed\\
		
		We will now apply Lemma 3.4 (the Main Lemma) of \cite{bellini&rodrigues&tomita2021.Qkappa} with ${\mathcal F}=\{ g:\,g\in\calG\} \cup \{h_{x_0}, h_{x_1}\}$, $\xi _g=\xi_g$ for $g\in\calG$, $\xi_{h_{x_0}}= 0$, $\xi_{h_{x_1}}= 1$, $d_0=\chi_0$, $d_1=\chi_1$, $d=\chi_2$ and $C=E$. Take $\psi:\Q^{(E)}\to\bbT$ given by the conclusion of the Lemma.
		
		Clearly condition a) of this Lemma is satisfied. 
		
		Furthermore, 
		$(\psi(h_{x_i}(k)):k\in \omega)$ has $\psi(\chi_i)$ as an accumulation point for $i<2$. Since these sequences are reorderings of a subsequence of 
		$(\psi(\sum_{g\in\calH'}r_g g(n)):n\in B)$ and  $\phi(\chi_0)\neq \phi(\chi_1)$, it follows that b) is satisfied.
	\end{proof}

\begin{prop}\label{not.converges}
Let $h\in G^\omega$ be a one-to-one sequence. Then the set $\calE_h:=\{p\in\calP:\text{there is }\beta<\alpha^p\text{ such that }(\phi^p(h(n))(\beta):n\in\omega)\text{ does not converge}\}$
\end{prop}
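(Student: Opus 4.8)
The plan is to verify that $\calE_h$ is both open and dense in $\calP$, the density being the substantive part and resting entirely on Lemma~\ref{not.converges.lemma}. As with $\calC_e$ in Proposition~\ref{denseopen.Hausdorff}, I read membership in $\calE_h$ as implicitly requiring $\bigcup_{n\in\omega}\supp h(n)\subseteq E^p$, so that $\phi^p(h(n))$ is defined. Openness is then immediate: if $p\in\calE_h$ with witnessing coordinate $\beta<\alpha^p$ and $q\le p$, then $\bigcup_n\supp h(n)\subseteq E^p\subseteq E^q$, $\beta<\alpha^p\le\alpha^q$, and $\phi^q$ agrees with $\phi^p$ on $\Q^{(E^p)}$ at all coordinates $<\alpha^p$, so $(\phi^q(h(n))(\beta):n\in\omega)$ is literally the same divergent sequence and $q\in\calE_h$.

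For density, fix $p\in\calP$. Since $([g]_\calU:g\in\calH)\cup([\chi_{\vec\mu}]_\calU:\mu<\lambda)$ is a $\Q$-basis of $\Ult_\calU(G)$, I write $[h]_\calU=\sum_{g\in F}r_g[g]_\calU+[\vec c]_\calU$ with $F\subseteq\calH$ finite, each $r_g\in\Q\bs\{0\}$, and $c\in G$ collecting the contribution of the constant sequences, and I set $B_0=\{n\in\omega:h(n)=\sum_{g\in F}r_gg(n)+c\}\in\calU$. The crucial observation is that $F\ne\void$: otherwise $B_0=\{n:h(n)=c\}\in\calU$ would be infinite, contradicting injectivity of $h$. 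Applying Proposition~\ref{denseopen.allbasicsequences} to the finitely many members of $F$, I may pass to an extension of $p$ and assume $F\subseteq\calG^p$; this is exactly what lets $F$ serve as the ``$\calH'$'' of Lemma~\ref{not.converges.lemma}. I then choose a countable $E\subseteq\lambda$ with $\omega\cup E^p\cup\bigcup_n\supp h(n)\subseteq E$ (so that $\supp c\subseteq E$ and $\bigcup_{g\in\calG^p,\,n}\supp g(n)\subseteq E^p\subseteq E$ hold automatically), and apply Lemma~\ref{not.converges.lemma} with $\calG:=\calG^p$, $B:=B_0$, $\calH':=F$, rationals $(r_g)_{g\in F}$, this $E$, and $(\xi_g)_{g\in\calG^p}:=(\xi^p_g)_{g\in\calG^p}$. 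This yields a homomorphism $\psi:\Q^{(E)}\to\bbT$ with $\calU$-$\lim(\psi\circ g)=\psi(\chi_{\xi^p_g})$ for every $g\in\calG^p$ (clause a) with $N=1$) and with $(\psi(\sum_{g\in F}r_gg(n)):n\in B_0)$ non-convergent (clause b)).

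To close the argument I would take $q=(E,\,\alpha^p+1,\,\calG^p,\,\xi^p,\,\varphi{}^\frown\psi)$, where $\varphi:\Q^{(E)}\to\bbT^{\alpha^p}$ is any extension of $\phi^p$ (by divisibility of $\bbT^{\alpha^p}$) and $\psi$ occupies the new coordinate $\alpha^p$. One checks $q\in\calP$: on coordinates $<\alpha^p$ the $\calU$-limit clause reduces to the one for $p$, since for $g\in\calG^p$ both $g(n)$ and $\chi_{\xi^p_g}$ lie in $\Q^{(E^p)}$, where $\varphi=\phi^p$; on coordinate $\alpha^p$ it is precisely clause a) above. The defining clauses of $q\le p$ hold because $\varphi$ extends $\phi^p$ at all coordinates $<\alpha^p$ and $(\calG^p,\xi^p)$ is untouched. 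Finally $q\in\calE_h$ is witnessed by $\beta=\alpha^p$: for $n\in B_0$ one has $\phi^q(h(n))(\alpha^p)=\psi(h(n))=\psi\big(\textstyle\sum_{g\in F}r_gg(n)\big)+\psi(c)$, a fixed $\bbT$-translate of the sequence from clause b); hence this sequence diverges along $B_0$, and therefore over all of $\omega$, since any limit of the whole sequence would restrict to a limit along the infinite set $B_0$.

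I expect the genuine difficulty to be entirely contained in Lemma~\ref{not.converges.lemma}; what is left is the reduction to it, whose only real subtlety is recognizing that injectivity of $h$ forces the ``$\calH$-part'' $F$ of $[h]_\calU$ to be nonempty (so that the lemma produces an honest non-convergent sequence), while the constant $c$ contributes only the harmless translation by $\psi(c)$. The remaining work --- grafting one new coordinate onto $\phi^p$ without disturbing the $\calU$-limit coherence on the old coordinates, and the verification $q\le p$ --- is routine bookkeeping.
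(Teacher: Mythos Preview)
Your proof is correct and follows the same strategy as the paper's: reduce to Lemma~\ref{not.converges.lemma} after arranging that the finitely many basis elements $F$ occurring in $[h]_\calU$ lie in $\calG^p$, then graft the resulting $\psi$ onto $\phi^p$ as a single new coordinate. The one organizational difference is that the paper, after securing $F\subseteq\calG^{p_0}$, builds an $\omega$-chain $p_0\ge p_1\ge\cdots$ (via Proposition~\ref{denseopen.Hausdorff}) to force $\bigcup_n\supp h(n)$ and the constant part into $E^{p_\omega}$, invokes $\omega_1$-closure to get $p_\omega$, and then sets $E^q=E^{p_\omega}$ so that no extension of $\phi^{p_\omega}$ is needed on old coordinates. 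You instead enlarge $E$ directly and extend $\phi^p$ to $\varphi$ by divisibility, exactly as in Proposition~\ref{denseopen.Hausdorff}; your observation that $g(n),\chi_{\xi^p_g}\in\Q^{(E^p)}$ for $g\in\calG^p$ is precisely what makes the $\calU$-limit clause on old coordinates survive this arbitrary extension. This is a mild streamlining---it saves the appeal to Proposition~\ref{omega1closed}---but the content is the same.
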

\begin{proof}
Openness: Suppose $p\in\calE_h$ and $q\le p$. Then for some $\beta<\alpha^p$, $(\phi^p(h(n))(\beta):n\in\omega)$ does not converge. By (2), $\beta<\alpha^p\le\alpha^q$, and by (1) $E^p\subseteq E^q$. It follows that $h(n)\in\Q^{(E^q)}$ for all $n\in\omega$ and, by (6), $\phi^q(h(n))(\beta)=\phi^p(h(n))(\beta)$ for all $n\in\omega$. Since $(\phi^p(h(n))(\beta):n\in\omega)$ does not converge, then $(\phi^q(h(n))(\beta):n\in\omega)$ does not converge. Thus, $q\in\calE_h$ as well.

Denseness: Let $p\in\calP$ be given. Take $\calH'\subseteq\calH$ finite, $\mu_0,\ldots,\mu_{m-1}\in\lambda$ and $(r_g:g\in\calH')$ and $(s_{\mu_j}:j<m)$ families of rational numbers such that $[h]_\calU=\sum_{g\in\calH'}r_g [g]_\calU + \sum_{j<m}s_{\mu_j} [\chi_{\vec{\mu_j}}]_\calU$.

We will define a decreasing sequence of conditions $(p_i:i\in\omega)$. First, we obtain $p_0\le p$ by applying Proposition \ref{denseopen.allbasicsequences} $|\calH'|$ times in order to guarantee that $\calH'\subseteq\calG^{p_0}$.

Take an enumeration $(\gamma_k:k\ge1)$ of $\cup_{g\in\calH', n\in\omega}\supp g(n)\bigcup\cup_{n\in\omega}\supp h(n)\bigcup\{\mu_j:j<m\}$.

Apply Proposition \ref{denseopen.Hausdorff} using $e=\chi_{{\gamma_1}}$ and obtain $p_1\le p_0$ such that $\chi_{{\gamma_1}}\in\Q^{(E^{p_1})}$, which implies $\gamma_1\in E^{p_1}$.

Now recursively apply Proposition \ref{denseopen.Hausdorff} for each $i>1$ using $e=\chi_{{\gamma_i}}$ and obtain $p_i\le p_{i-1}$ such that $\chi_{{\gamma_i}}\in\Q^{(E^{p_i})}$, which implies $\gamma_i\in E^{p_i}$.

Applying Proposition \ref{omega1closed}, we obtain a $p_\omega\le p_i$ for all $i\in\omega$.

We will now use Lemma \ref{not.converges.lemma} with $\calG=\calG^{p_\omega}$, $\calH'=\calH'$, $r_g=r_g$ for $g\in\calH'$, $E=E^{p_\omega}$, $\xi_g=\xi^{p_\omega}_g$ for $g\in\calG^{p_\omega}$ and $B\in\calU$ such that $h(n)=\sum_{g\in\calH'}r_g g(n) + \sum_{j<m}s_{\mu_j} \chi_{{\mu_j}}$ for all $n\in B$. We thus obtain a $\psi:\Q^{(E^{p_\omega})}\to\bbT$ such that 
\begin{enumerate}[label=\alph*)]
			
			\item $\calU$-$\lim (\psi (\frac{1}{N}g))=\psi(\frac{1}{N}\chi_{\xi_g})$, for each $g\in\calG$ and $N\in \omega$, and
			\item $(\psi(\sum_{g\in\calH'}r_g g(n)):n\in B)$ does not converge.	
		\end{enumerate}
		
We define now a $q\le p_\omega$. Define $E^q=E^{p_\omega}$, $\alpha^q=\alpha^{p_\omega}+1$, $\calG^q=\calG^{p_\omega}$, $\xi^q=\xi^{p_\omega}$ and $\phi^q={\phi^{p_\omega}}^\frown\psi$.

Since $(\psi(\sum_{g\in\calH'}r_g g(n)):n\in B)$ does not converge and $\sum_{j<m}s_{\mu_j} \chi_{\mu_j}$ is constant, it follows that $(\psi(h(n)):n\in B)$ does not converge, and so $(\psi(h(n)):n\in\omega)$ does not converge.

Let $\beta=\alpha^p$. The definition $\phi^q={\phi^{p_\omega}}^\frown\psi$ means that for all $x\in\Q^{(E^q)}$, $\phi^q(x)(\beta)=\psi(x)$. Thus,  $(\psi(h(n)):n\in\omega)$ does not converge means that $(\phi^q(h(n))(\beta):n\in\omega)$ does not converge, proving that $q\in\calE_h$.

\end{proof}

\begin{prop}\label{omega2cc}
Assume CH. Then the partial order $\calP$ has the $\omega_2$-chain condition.
\end{prop}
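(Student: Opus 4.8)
The plan is to run the standard $\Delta$-system argument for forcings of this type, using CH to control the combinatorial complexity of conditions. Suppose, for contradiction, that $\{p_\eta : \eta < \omega_2\}$ is an antichain in $\calP$. Write $p_\eta = (E_\eta, \alpha_\eta, \calG_\eta, \xi_\eta, \phi_\eta)$. Each $E_\eta$ is a countable subset of $\lambda$ and each $\calG_\eta$ is a countable subset of $\calH$; also $\alpha_\eta < \frakc$, and under CH $\frakc = \omega_1$, so $\{\alpha_\eta : \eta < \omega_2\}$ has an unbounded-to-constant refinement — more precisely, after passing to a subset of size $\omega_2$ we may assume all $\alpha_\eta$ are equal to some fixed $\alpha^*$ (there are only $\omega_1$ possible values). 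Similarly the range of $\xi_\eta$ lies in $\frakc \cap E_\eta$, so each $\xi_\eta$ is a countable partial function, hence coded by a countable object.

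Next I would apply the $\Delta$-system lemma (valid at $\omega_2$ with countable kernels since $\omega_1^{<\omega_1} = \omega_1$ under CH) to the family $\{E_\eta \cup (\text{support data of } \calG_\eta) : \eta < \omega_2\}$, extracting an uncountable (in fact size-$\omega_2$) subfamily so that the $E_\eta$ form a $\Delta$-system with root $R$, and likewise so that $\calG_\eta \cap \calG_{\eta'}$ is a fixed $\calG^*$ for $\eta \ne \eta'$ in the subfamily, with $\xi_\eta \restriction \calG^* = \xi^*$ fixed. Because there are only $\omega_1$-many homomorphisms from $\Q^{(R)}$ to $\bbT^{\alpha^*}$ (as $\Q^{(R)}$ is countable, $\bbT^{\alpha^*}$ has size $\frakc = \omega_1$, and a homomorphism is determined by countably many values — this is where CH is essential), we may further thin so that $\phi_\eta \restriction \Q^{(R)} = \phi^*$ is the same for all $\eta$ in the subfamily. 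Finally, by another thinning, arrange that the order type of $E_\eta$ and the way $R$ sits inside it is the same for all $\eta$, so there are canonical order isomorphisms between the $E_\eta$'s fixing $R$.

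The crux is then to show that any two conditions $p_\eta, p_{\eta'}$ from this refined family are compatible, contradicting that they lie in an antichain. I would build a common extension $q = (E_\eta \cup E_{\eta'}, \alpha^*, \calG_\eta \cup \calG_{\eta'}, \xi_\eta \cup \xi_{\eta'}, \phi)$: the union $\xi_\eta \cup \xi_{\eta'}$ is a well-defined function precisely because on the overlap $\calG^*$ both agree with $\xi^*$. For $\phi$, I want a homomorphism $\Q^{(E_\eta \cup E_{\eta'})} \to \bbT^{\alpha^*}$ extending both $\phi_\eta$ and $\phi_{\eta'}$; since these agree on $\Q^{(R)} = \Q^{(E_\eta \cap E_{\eta'})}$ (here one must check $E_\eta \cap E_{\eta'} = R$, which the $\Delta$-system gives) and $\Q^{(E_\eta)} + \Q^{(E_{\eta'})}$ is the pushout of the two along $\Q^{(R)}$ inside $\Q^{(E_\eta \cup E_{\eta'})}$, the two maps glue to a homomorphism on that subgroup, which then extends to all of $\Q^{(E_\eta \cup E_{\eta'})}$ by divisibility of $\bbT^{\alpha^*}$. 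The one genuine obstacle is the last coherence condition: $q$ must satisfy $\calU\text{-}\lim(\phi \circ g) = \phi(\chi_{\xi_g})$ for every $g \in \calG_\eta \cup \calG_{\eta'}$. For $g \in \calG_\eta$ this should follow from the corresponding property of $\phi_\eta$ together with the fact that the new coordinates of $\phi$ (those on $\Q^{(E_{\eta'} \setminus R)}$, restricted against values $g(n)$ which have support in $E_\eta$) do not interfere — one needs $\supp g(n) \subseteq E_\eta$ and $\xi_g \in E_\eta$ so that $\phi \circ g$ and $\phi(\chi_{\xi_g})$ only read off the $\phi_\eta$-part; I would arrange the extension $\phi$ so that it literally equals $\phi_\eta$ on $\Q^{(E_\eta)}$ and $\phi_{\eta'}$ on $\Q^{(E_{\eta'})}$, which is possible by the pushout description, and then the coherence condition for each $g$ is inherited verbatim. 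I expect verifying this non-interference cleanly — making sure the glued homomorphism restricts correctly and that the $\calU$-limits are computed in the right subproduct — to be the main point requiring care, but it is exactly the routine part of such ccc arguments once the $\Delta$-system setup is in place.
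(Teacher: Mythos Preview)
Your proposal is correct and follows essentially the same route as the paper: fix the common $\alpha^*$, apply the $\Delta$-system lemma to the $E_\eta$'s and to the $\calG_\eta$'s separately, use CH to stabilize $\phi_\eta\restriction\Q^{(R)}$ and $\xi_\eta\restriction\calG^*$, and then amalgamate two refined conditions by gluing the homomorphisms along $\Q^{(R)}$ (the paper writes this gluing explicitly via projections $\pi_0,\pi_1,\pi_2$, which is exactly your pushout description). Two minor remarks: the order-type thinning step is unnecessary here since no isomorphism transfer is used, and the ``extend by divisibility'' step is vacuous because $\Q^{(E_\eta)}+\Q^{(E_{\eta'})}$ already equals $\Q^{(E_\eta\cup E_{\eta'})}$; otherwise your argument and the paper's coincide, and your explicit check of the $\calU$-limit coherence (which the paper dismisses with ``and we are done'') is the correct justification.
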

\begin{proof}
Since under CH, $\frakc^+=\omega_2$, we will show that $\calP$ has the $\frakc^+$-c.c.. So let $\calQ\subseteq\calP$ of cardinality $\frakc^+$. We will show that $\calQ$ has a subset consisting of $\frakc^+$ pairwise compatible elements.

First, take a $\calQ_0\subseteq\calQ$ of cardinality $\frakc^+$ and an $\alpha<\frakc$ such that $\alpha_q=\alpha$ for each $q\in\calQ_0$.

Using the $\Delta$-system Lemma, take a $\calQ_1\subseteq\calQ_0$ of cardinality $\frakc^+$ such that $\{E^q:q\in\calQ_1\}$ is a $\Delta$-system of root $\tilde E$. Now, using CH, we have that $\left|{(\bbT^\alpha)}^{\Q^{(\tilde E)}}\right|=\frakc$, and thus we may take a $\calQ_2\subseteq\calQ_1$ of cardinality $\frakc^+$ such that ${\phi^q}|_{\Q^{(\tilde E)}}={\phi^p}|_{\Q^{(\tilde E)}}$ for all $q,p\in\calQ_2$.

Using the $\Delta$-system Lemma again, take $\calQ_3\subseteq\calQ_2$ of cardinality $\frakc^+$ such that $\{\calG^q:q\in\calQ_3\}$ is a $\Delta$-system of root $\tilde\calG$. Now since for each $q\in\calQ_3$, $(\xi^q_g:g\in\tilde\calG)\in\frakc^{\tilde\calG}$ and $|\frakc^{\tilde\calG}|=\frakc$, take $\calQ_4\subseteq\calQ_3$ of cardinality $\frakc^+$ such that for all $q,p\in\calQ_4$ and each $g\in\tilde\calG$, $\xi^q_g=\xi^p_g$.

A common extension to $q,p\in\calQ_4$ is an $r$ defined as follows: $E^r=E^q\cup E^p$; $\alpha^r=\alpha^q=\alpha^p$; $\calG^r=\calG^q\cup\calG^p$; $\xi^r=\xi^q\cup\xi^p$.

To define $\phi^r$, notice that $\Q^{(E^r)}=\Q^{(E^q\bs\tilde E)}\oplus\Q^{(\tilde E)}\oplus\Q^{(E^p\bs\tilde E)}$. Then, let $\pi_0:\Q^{(E^r)}\to\Q^{(E^q\bs\tilde E)}$, $\pi_1:\Q^{(E^r)}\to\Q^{(\tilde E)}$ and $\pi_2:\Q^{(E^r)}\to\Q^{(E^p\bs\tilde E)}$ be the projections. Define $\phi^r=\phi^q\circ\pi_0+\phi^q\circ\pi_1+\phi^p\circ\pi_2=\phi^q\circ\pi_0+\phi^p\circ\pi_1+\phi^p\circ\pi_2$ and we are done.
\end{proof}

\begin{teo}\label{generic.monomorphism.theorem}
Assume CH. Then the forcing notion $\calP$ preserves cofinalities and cardinals, preserves $\frakc$ and does not add reals. Given $H$ a $\calP$-generic filter, the associated $\Phi$ (as in \ref{the.order}) is a well-defined monomorphism from $G$ to $\bbT^\frakc$. Also, given any $g\in\calH$, there exists a $\xi\in\frakc$ such that $\calU-\lim(\Phi\circ g)=\Phi(\chi_\xi)$. Furthermore, given any one-to-one sequence $h\in G^\omega$, $\Phi\circ h$ does not converge.
\end{teo}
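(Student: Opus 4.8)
The plan is to assemble Theorem \ref{generic.monomorphism.theorem} from the preservation facts and density arguments already established. The statement has four parts, so I would handle them in turn. First, the chain condition and closure: Proposition \ref{omega1closed} shows $\calP$ is $\omega_1$-closed, so no new reals (hence no new $\omega$-sequences of ground-model reals) are added and $\omega_1$ is preserved; Proposition \ref{omega2cc} gives the $\omega_2$-chain condition under CH, so cardinals $\ge\omega_2$ are preserved as well. Together these give preservation of all cofinalities and cardinals. Since no reals are added and CH holds in the ground model, $\frakc$ is preserved too (it equals $\omega_1$ on both sides). I would spell this out in one short paragraph citing the two propositions and standard forcing lemmas.

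Next, $\Phi$ is a well-defined homomorphism. I would argue that genericity together with Propositions \ref{denseopen.fullcodomain} (the sets $\calA_\alpha$ are dense, so $\sup\{\alpha^p : p\in H\}=\frakc$) and the compatibility conditions (1), (2), (6) in Definition \ref{the.order} guarantee that the union defining $\Phi(a)(\xi)$ is a single well-defined element of $\bbT$ for every $a\in\bigcup_{p\in H}\Q^{(E^p)}$ and every $\xi<\frakc$; the density of $\calC_e$ for $e\ne 0$ (Proposition \ref{denseopen.Hausdorff}) forces $\dom\Phi$ to be all of $G$ (every $\chi_\mu$ enters some $E^p$, hence all of $\Q^{(\lambda)}$ is covered) and simultaneously forces $\Phi(e)\ne 0$, which is exactly injectivity of $\Phi$. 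That $\Phi$ is a homomorphism is immediate because each $\phi^p$ is. So $\Phi:G\to\bbT^\frakc$ is a monomorphism.

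For the $\calU$-limit statement: given $g\in\calH$, by Proposition \ref{denseopen.allbasicsequences} the set $\calS_g$ is dense, so some $p\in H$ has $g\in\calG^p$; set $\xi=\xi^p_g\in\frakc$. The condition ``$\calU\text{-}\lim(\phi^p\circ g)=\phi^p(\chi_{\xi_g})$'' holds coordinatewise below $\alpha^p$, and by (4) and (6) this persists in every extension, so in the limit $\calU\text{-}\lim(\Phi\circ g)(\xi')=\Phi(\chi_\xi)(\xi')$ for every $\xi'<\frakc$ — one checks the $\calU$-limit is computed coordinatewise in $\bbT^\frakc$ with the product topology, which is routine. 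Finally, for divergence: given a one-to-one $h\in G^\omega$, Proposition \ref{not.converges} says $\calE_h$ is dense (the excerpt omits stating it is dense/open but the proof there establishes exactly that), so some $p\in H$ has a coordinate $\beta<\alpha^p$ with $(\phi^p(h(n))(\beta):n\in\omega)$ divergent; by (6) this is the coordinate $\beta$ of $\Phi\circ h$, so $\Phi\circ h$ cannot converge in $\bbT^\frakc$.

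The main obstacle is the bookkeeping for well-definedness and surjectivity-onto-$G$ of $\dom\Phi$: one must verify that the various density arguments collectively guarantee $\bigcup_{p\in H}\Q^{(E^p)}=G$ (equivalently $\bigcup_{p\in H}E^p\supseteq\lambda$) — this is not literally among the density lemmas proved, but follows from $\calC_{\chi_\mu}$ being dense for each $\mu<\lambda$, forcing $\mu\in E^p$ for some $p\in H$. I would state this as a small observation. Everything else is a direct reading-off from the five density/closure propositions; no genuinely new argument is needed beyond carefully checking that $\calU$-limits in the product $\bbT^\frakc$ are coordinatewise, which lets the coordinate-by-coordinate conditions in Definition \ref{the.order} globalize to $\Phi$.
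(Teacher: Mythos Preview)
Your proposal is correct and follows essentially the same approach as the paper's own proof: both assemble the theorem from Propositions \ref{omega1closed}, \ref{omega2cc}, \ref{denseopen.Hausdorff}, \ref{denseopen.fullcodomain}, \ref{denseopen.allbasicsequences}, and \ref{not.converges} in the same way, checking well-definedness via compatibility in $H$, injectivity and full domain via the $\calC_e$'s, the $\calU$-limit via $\calS_g$ and a coordinatewise check, and divergence via $\calE_h$. The only small points the paper makes more explicit are taking a single $p\in H$ witnessing $e,e',e+e'\in\Q^{(E^p)}$ to verify the homomorphism identity, and remarking that being a basis for $\Ult_\calU(G)$ is absolute so that $\calH$ retains its role in the extension.
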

\begin{proof}
By Propositions \ref{omega1closed} and \ref{omega2cc}, $\calP$ preserves cofinalities (and therefore cardinals), does not add reals and preserves $\frakc$. Note that since being a basis for $\Ult_\calU(G)$ is absolute for transitive models of ZFC, $\calH$ is still a basis for $\Ult_\calU(G)$ in the extension.

Let $H$ be a $\calP$-generic filter and let $\Phi$ be its associated homomorphism.

First, let us see that $\Phi:G\to\bbT^\frakc$ is well-defined. Let $q,p\in H$ and suppose $\xi<\min\{\alpha^q,\alpha^p\}$ and $e\in\Q^{(E^q\cap E^q)}$. We must see that $\phi^q(e)(\xi)=\phi^p(e)(\xi)$. Take $r\in H$ such that $r\le q,p$. Then $\xi<\alpha^r$ and $e\in\Q^{(E^r)}$, and by item (5) of Definition \ref{the.order}, $\phi^q(e)(\xi)=\phi^r(e)(\xi)=\phi^p(e)(\xi)$.

Now let $\alpha<\frakc$ and $e\in\Q^{(\lambda)}$ such that $e\ne0$. Since $\calC_e$ and $\calA_\alpha$ are open and dense, let $p\in H$ such that $e\in\Q^{(E^p)}$, $\phi^p(e)\ne0$ and $\alpha^p>\alpha$. Since $\phi^p(e)\ne0$, there is a $\xi\le\alpha^p$ such that $\phi^p(e)(\xi)\ne0$, and therefore $\Phi(e)(\xi)\ne0$, so that $\Phi(e)\ne0$. And since $\alpha\subseteq\alpha^p\subseteq\dom\Phi(e)\subseteq\frakc$, and $\alpha$ was arbitrary, it follows that $\dom\Phi(e)=\frakc$.

We have thus seen that the domain of $\Phi$ is $\Q^{(\lambda)}$, the codomain is $\bbT^\frakc$ and that $\Phi$ is injective.

Now we see that $\Phi$ is a homomorphism. Let $e,e'\in\Q^{(\lambda)}$. Since $\calC_e$, $\calC_{e'}$ and $\calC_{e+e'}$ are dense and open, take $p\in H$ such that $e,e',e+e'\in\Q^{(E^p)}$. We know that $\phi^p$ is a homomorphism, and so $\Phi(e+e')=\phi^p(e+e')=\phi^p(e)+\phi^p(e')=\Phi(e)+\Phi(e')$.

Let now $g\in\calH$. Since $\calS_g$ is open and dense, let $p\in H$ such that $g\in\calG^p$. We have then that $\calU-\lim(\phi^p\circ g)=\phi^p(\xi_g)$. Let us see that $\calU-\lim(\Phi\circ g)=\Phi(\xi_g)$. Let $F$ be a finite subset of $\frakc$ and let $\alpha<\frakc$ such that $F\subseteq\alpha$. Since $\calA_\alpha$ is open and dense, and $p\in H$, let $q\in H$ such that $q\le p$ and $\alpha^q>\alpha$. We have then that $\calU-\lim(\pi_F\circ\phi^q\circ g)=(\pi_F\circ\phi^q)(\xi_g)$. Since $\pi_F\circ\phi^q=\pi_F\circ\Phi$ (due to $\alpha^q\supseteq F$), it follows that $\calU-\lim(\pi_F\circ\Phi\circ g)=(\pi_F\circ\Phi)(\xi_g)$, as we sought for.

Finally, let $h\in G^\omega$ be a one-to-one sequence. Since $\calE_h$ is open and dense, let $p\in H\cap\calE_h$. Take then a $\beta<\alpha^p$ such that $(\phi^p(h(n))(\beta):n\in\omega)$ does not converge. Since $\Phi(h(n))(\beta)=\phi^p(h(n))(\beta)$ for each $n\in\omega$, it follows that  $(\Phi(h(n))(\beta):n\in\omega)$ does not converge, which in turn implies that  $(\Phi(h(n)):n\in\omega)=\Phi\circ h$ does not converge.
\end{proof}

\begin{teo}
It is consistent with ZFC that given $\lambda$ a countably cofinal cardinal and $\calU$ a selective ultrafilter, $G$ can be endowed with a $\calU$-compact Hausdorff group topology without non-trivial convergent sequences.
\end{teo}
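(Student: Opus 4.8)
The plan is to force over a model of CH. Start with $V\models\mathrm{ZFC}+\mathrm{CH}$ (which exists whenever $\mathrm{ZFC}$ is consistent); since CH yields selective ultrafilters, fix in $V$ a cardinal $\lambda$ with $\cf\lambda=\omega$, a selective ultrafilter $\calU$, a set $\calH\subseteq G^\omega$ with $\{[g]_\calU:g\in\calH\}\cup\{[\chi_{\vec\mu}]_\calU:\mu<\lambda\}$ a $\Q$-basis of $\Ult_\calU(G)$, and the poset $\calP$ of Definition~\ref{the.order} with $G=\Q^{(\lambda)}$. Let $H$ be $\calP$-generic over $V$. By Theorem~\ref{generic.monomorphism.theorem} and Proposition~\ref{omega1closed}, $\calP$ is $\omega_1$-closed and preserves cofinalities, cardinals and $\frakc$, so in $V[H]$: $\lambda$ is still a cardinal with $\cf\lambda=\omega$; $\calP$ adds no reals, so $\calU$ is still a selective ultrafilter; and $\calP$ adds no new $\omega$-sequences of ground-model objects, so $G^\omega$ and $\Ult_\calU(G)$ are unchanged and $\calH$ still witnesses the hypothesis (being a $\Q$-basis is absolute). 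Hence it suffices to produce the desired topology on $G$ in $V[H]$.

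Let $\Phi\colon G\to\bbT^\frakc$ be the generic monomorphism provided by Theorem~\ref{generic.monomorphism.theorem}. I would endow $G$ with the coarsest topology $\tau$ making $\Phi$ continuous, equivalently transport to $G$ the subspace topology of the subgroup $\Phi(G)\le\bbT^\frakc$; then $\Phi\colon(G,\tau)\to\bbT^\frakc$ is a topological embedding onto $\Phi(G)$, and since $\bbT^\frakc$ is a Hausdorff topological group and $\Phi$ is an injective homomorphism, $(G,\tau)$ is a Hausdorff topological group. For the absence of non-trivial convergent sequences, recall that a convergent sequence in a Hausdorff space which is not eventually constant has an injective subsequence converging to the same point (if its range were finite, some value would be attained cofinally, contradicting the Hausdorff property). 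So a non-trivial convergent sequence in $(G,\tau)$ would give an injective $h\in G^\omega$ with $h\to a$; by continuity $\Phi\circ h\to\Phi(a)$ in $\bbT^\frakc$, contradicting the final clause of Theorem~\ref{generic.monomorphism.theorem}.

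For $\calU$-compactness, fix $(a_n)_n\in G^\omega$. As $\bbT^\frakc$ is compact and Hausdorff, $z^\ast:=\calU\text{-}\lim_n\Phi(a_n)$ exists uniquely in $\bbT^\frakc$, and the task is to show $z^\ast\in\Phi(G)$. Expanding $[(a_n)]_\calU$ in the $\Q$-basis given by $\calH$ and clearing denominators, fix distinct $g_0,\dots,g_{k-1}\in\calH$, distinct $\mu_0,\dots,\mu_{m-1}<\lambda$, integers $p_i,t_j$ and $N\ge1$ with $[(a_n)]_\calU=\sum_{i<k}p_i\,[\tfrac1N g_i]_\calU+\sum_{j<m}t_j\,[\tfrac1N\chi_{\vec{\mu_j}}]_\calU$, so that $a_n=\sum_{i<k}p_i\cdot\tfrac1N g_i(n)+\sum_{j<m}\tfrac{t_j}{N}\chi_{\mu_j}$ for $\calU$-many $n$. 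Applying $\Phi$ and using that $\calU$-limits in a compact Hausdorff group depend only on a $\calU$-large index set and commute with continuous maps (in particular with $+$ and with multiplication by a fixed integer), together with $\calU\text{-}\lim_n\Phi(\tfrac1N g_i(n))=\Phi(\tfrac1N\chi_{\xi_{g_i}})$, one gets $z^\ast=\Phi\bigl(\sum_{i<k}\tfrac{p_i}{N}\chi_{\xi_{g_i}}+\sum_{j<m}\tfrac{t_j}{N}\chi_{\mu_j}\bigr)$. The argument of $\Phi$ lies in $G$, so $z^\ast\in\Phi(G)$ and $(a_n)_n$ has a $\calU$-limit in $(G,\tau)$; as $(a_n)_n$ was arbitrary, $(G,\tau)$ is $\calU$-compact. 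Combined with the previous paragraph this proves the theorem, the ground-model hypothesis being merely $\mathrm{Con}(\mathrm{ZFC})$.

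The step I expect to be the real obstacle is the identity $\calU\text{-}\lim_n\Phi(\tfrac1N g_i(n))=\Phi(\tfrac1N\chi_{\xi_{g_i}})$. What Definition~\ref{the.order} literally records about a condition is only $\calU\text{-}\lim(\phi\circ g)=\phi(\chi_{\xi_g})$, i.e.\ the scalar $N=1$, and from that alone one only gets that $z^\ast$ differs from the explicit element $\Phi(\sum q_i\chi_{\xi_{g_i}}+\sum r_j\chi_{\mu_j})$ by a torsion element of $\bbT^\frakc$ --- which need not vanish, $\Phi(G)\cong\Q^{(\lambda)}$ being torsion-free. So one must use the $\tfrac1N$-uniform form of the limit condition: this is precisely clause~(b) of the Main Lemma of \cite{bellini&rodrigues&tomita2021.Qkappa}, which holds for all $N$, and the density constructions of Propositions~\ref{denseopen.Hausdorff} and~\ref{denseopen.allbasicsequences} can be carried out so as to preserve it (when a new basic sequence $g$ is put into $\calG$, define $\phi^q$ on $\Q\chi_{\xi_g}$ by $\tfrac1N\chi_{\xi_g}\mapsto\calU\text{-}\lim_n\phi(\tfrac1N g(n))$, which is a genuine homomorphism on $\Q\chi_{\xi_g}$). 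Thus one either reads Theorem~\ref{generic.monomorphism.theorem} in this slightly strengthened form or re-runs its proof tracking the scalars; everything else --- that $\tau$ is a Hausdorff group topology, the reduction to injective sequences, and the absoluteness bookkeeping in $V[H]$ --- is routine.
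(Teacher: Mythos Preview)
Your proof is correct and follows the same route as the paper: force with $\calP$ over a CH model, take the generic monomorphism $\Phi$ from Theorem~\ref{generic.monomorphism.theorem}, give $G$ the initial topology for $\Phi$, and read off Hausdorffness, $\calU$-compactness (via the basis decomposition in $\Ult_\calU(G)$), and the absence of non-trivial convergent sequences exactly as the paper does.

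Where you diverge from the paper is only in the level of care, and here you are in fact \emph{more} careful than the paper. The paper's $\calU$-compactness argument writes
\[
\calU\text{-}\lim f \;=\; \sum_{g\in\calH} r_g\cdot(\calU\text{-}\lim g)\;+\;\sum_{\mu<\lambda} s_\mu\cdot\chi_\mu,
\]
tacitly using $\Q$-linearity of $\calU$-limits in $(G,\tau)$. As you point out, multiplication by $\tfrac1N$ is not automatically continuous in a group topology induced by a monomorphism into $\bbT^{\frakc}$, so this step really needs the $\tfrac1N$-uniform limit condition $\calU\text{-}\lim\Phi(\tfrac1N g)=\Phi(\tfrac1N\chi_{\xi_g})$, which Definition~\ref{the.order} (and hence Theorem~\ref{generic.monomorphism.theorem}) only records for $N=1$. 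Your diagnosis and your fix are both right: clause~(b) of the Main Lemma already delivers all $N$, and the density arguments (in particular the extension step in Proposition~\ref{denseopen.allbasicsequences}, where one should set $\phi^q(\tfrac1N\chi_{\xi_g})=\calU\text{-}\lim\psi(\tfrac1N g(n))$) go through verbatim if one strengthens the last bullet of Definition~\ref{the.order} to hold for every $N\in\omega$. With that reading of the poset the paper's one-line computation becomes fully justified, and your argument and the paper's coincide.
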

\begin{proof}
Consider the forcing model obtained via forcing with $\calP$. We fix a generic monomorphism $\Phi$ as in \ref{generic.monomorphism.theorem}. Since we have that $\Phi$ is a monomorphism from $G$ to $\bbT^\frakc$, then $\Phi$ induces a Hausdorff group topology in $G$ such that for any $g\in\calH$, there exists a $\xi\in\frakc$ such that $\calU-\lim g=\chi_\xi$. Fix one such $\xi_g$ for each $g\in\calH$.

Now let us see that such topology is indeed $\calU$-compact. Let $f\in G^\omega$. Since $([g]_\calU:g\in\calH)\cup([\chi_{\vec\mu}]_\calU:\mu<\lambda)$ is a $\Q$-basis for $\Ult_\calU(G)$, there exist families $(r_g:g\in\calH)$ and $(s_\mu:\mu<\lambda)$ of rational numbers, all but finitely many of which are 0, such that $[f]_\calU=\sum_{g\in\calH}r_g\cdot[g]_\calU+\sum_{\mu<\lambda}s_\mu\cdot[\chi_{\vec\mu}]_\calU$. It follows then that $\calU-\lim f=\sum_{g\in\calH}r_g\cdot(\calU-\lim g)+\sum_{\mu<\lambda}s_\mu\cdot(\calU-\lim\chi_{\vec\mu})=\sum_{g\in\calH}r_g\cdot\chi_{\xi_g}+\sum_{\mu<\lambda}s_\mu\cdot\chi_\mu$ and $f$ has a $\calU$-limit.

Finally, there are no non-trivial convergent sequences since for each $h\in G^\omega$ one-to-one, $\Phi\circ h$ does not converge, which means that in the induced topology on $G$, $h$ does not converge.
\end{proof}

\section{Questions}

The recent breakthrough by Hru\v s\' ak, Ramos, Shelah and van Mill (\cite{hrusakvanmillgarciashelah2020}) obtained a countably compact group without non-trivial convergent sequences in ZFC. Their construction is over a $\frakc$-sized group of order 2. Shortly after Tomita and Trianon-Fraga (\cite{tomita&trianonfraga2022}) expanded this construction to obtain a countably compact group without non-trivial convergent sequences which has cardinality $2^\frakc$ and order 2. It is noteworthy that these constructions did not yield so far a $\calU$-compact group for any ultrafilter $\calU$ and do not seem to be suitable for non-torsion groups. (We recall that such results also imply the existence of a countably compact group whose square is not countably compact; therefore, not all countably compact groups are $\calU$-compact for some $\calU$).

As such, the following questions remain open:

\begin{question}
Is there a countably compact non-torsion group without non-trivial convergent sequences in ZFC?
\end{question}

\begin{question}
How large can countably compact groups without non-trivial convergent sequences be in ZFC?
\end{question}

\begin{question}
Is there a $\calU$-compact group without non-trivial convergent sequences in ZFC, for some ultrafilter $\calU$?
\end{question}

It should be noted that a positive answer to the first question implies the negation of Wallace's conjecture, that is, it implies the existence of a countably compact semigroup with both-sided cancellation which is not a group.

\end{document}